\def\sm{\tiny}
\begin{document}

\title[Elliptic curves with $p$-Selmer growth for all $p$]{Elliptic curves with $p$-Selmer growth\\ for all $p$}
\author{Alex Bartel}
\address{Department of Mathematics, POSTECH, Pohang, Gyungbuk 790-784, Republic of Korea}
\email{alxbartel@gmail.com}
\llap{.\hskip 10cm} \vskip -0.8cm
\maketitle

\begin{abstract}
It is known, that for every elliptic curve over $\Q$ there exists a quadratic
extension in which the rank does not go up. For a large class of elliptic
curves, the same is known with the rank replaced by the size of the 2-Selmer
group. We show, however, that there exists a large supply of semistable
elliptic curves $E/\Q$ whose 2-Selmer group grows in size in every bi-quadratic
extension, and such that moreover, for any odd prime $p$, the size of the
$p$-Selmer group grows in every $D_{2p}$-extension and every elementary
abelian $p$-extension of rank at least 2. We provide a simple criterion for
an elliptic curve over an arbitrary number field to exhibit this behaviour.
We also discuss generalisations to other
Galois groups.
\end{abstract}

\section{Introduction}
In \cite{noGoldfeld} it is shown that there exist elliptic curves over number
fields for which in every quadratic extension of the base field
either the rank goes up or the Tate-Shafarevich group becomes infinite.
Equivalently, every quadratic
twist of such a curve has either positive rank or an infinite Tate-Shafarevich
group (the latter is of course conjectured to never happen).
Over $\Q$, such curves do not
exist by the combined work of Kolyvagin \cite{Kolyvagin} and
Bump--Friedberg--Hoffstein--Murty--Murty--Waldspurger \cite{BFH,MurtyMurty,Waldspurger}.
In fact, it is conjectured that half of all quadratic twists of an elliptic
curve over $\Q$ have rank 0. Moreover, if $E/\Q$ has no cyclic 4-isogeny,
then there exists a quadratic extension $F/\Q$ such that the size of the 2-Selmer
group of $E$ over $F$ is the same as over $\Q$ \cite[Theorem 1]{Sw-D},
\cite[Theorem 1.5]{MR}, \cite[Theorems 1.1 and 1.3]{Klagsbrun}.

As we shall show, however, if we allow only slightly bigger extensions
of $\Q$ than quadratic, then there are lots of elliptic curves over $\Q$
whose Selmer groups grow in size in all such extensions.
Below, $S^p(E/F)$ will
denote the $p$-Selmer group of $E$ over a number field $F$, which we recall
to be an elementary abelian $p$-group defined as the the kernel
of the map $H^1(F,E[p])\longrightarrow \prod_{\fp} H^1(F_{\fp},E)$.
Here, the product runs over all places of $F$, and each $\Gal(\bar{F}_\fp/F_\fp)$
is identified with a subgroup of $\Gal(\bar{F}/F)$.

\begin{theorem}\label{thm:main}
Let $E/K$ be a semistable elliptic curve over a number field with positive rank.
Let $p$ and $G$ be one of the following combinations of a prime number and a
finite group:
\begin{enumerate}[(a)]
  \item\label{item:c2c2} $p=2$, $G\cong C_2\times C_2$;
  \item\label{item:d2p} $p$ is odd, $G\cong D_{2p}$, the dihedral group of order $2p$;
  \item\label{item:cpcp} $p$ is odd, and either $G\cong C_p\times C_p$ or $G\cong C_p\rtimes C_q$, where
  $q$ is an odd prime, and $C_q$ acts faithfully on $C_p$.
\end{enumerate}
Suppose further that if $p$ and $G$ are as in (\ref{item:d2p}), then the rank
of $E/K$ is greater than the number of primes of $K$ at which $E$ has non-split
multiplicative reduction, while if $p$ and $G$ are as in (\ref{item:c2c2}), then
the rank of $E/K$ is greater than the number of primes $v$ of non-split
multiplicative reduction for which $\ord_v(\Delta(E))$ is even.
Then, the following conclusions hold.
\begin{enumerate}
\item\label{item:shachange} If $\sha(E/K)[p^\infty]$ is finite, then we have that for every Galois
extension $F/K$ with Galois group $G$, either the order of the $p$-primary
part of the Tate-Shafarevich group changes at some step in $F/K$, or
$E(F)\supsetneq E(K)$.
\item\label{item:shagrowth} If $\sha(E/K)[p]=0$, then for every
Galois extension $F/K$ with Galois group $G$, either $\#\sha(E/F)[p^\infty] >0$,
or $E(F)\supsetneq E(K)$.
\item\label{item:selmergrowth} If $\sha(E/K)[p]=0$ and $E(K)[p]=0$, then
for every Galois extension $F/K$ with Galois group $G$, we have $\#S^p(E/F) > \#S^p(E/K)$.
\end{enumerate}
\end{theorem}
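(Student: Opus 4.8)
The plan is to prove all three assertions by contraposition: granting the rank condition but supposing that ``nothing grows'' in a given $G$-extension $F/K$, I shall derive a contradiction by playing the Brauer relation of $G$ against the behaviour of N\'eron--Tate regulators and Tamagawa numbers under that relation. For each of our groups $G$, fix the Brauer relation $\Theta=\sum_H a_H\,[G/H]$ --- an integer combination of transitive permutation representations vanishing in the rational representation ring, automatically with $\sum_H a_H=0$ --- given by $2\,[G/G]+[G/1]=[G/H_1]+[G/H_2]+[G/H_3]$ for $G\cong C_2\times C_2$ (the $H_i$ the order-$2$ subgroups), $2\,[G/G]+[G/1]=[G/C_p]+2\,[G/C_2]$ for $G\cong D_{2p}$, $p\,[G/G]+[G/1]=\sum_{i=0}^{p}[G/H_i]$ for $G\cong C_p\times C_p$ (the $H_i$ the order-$p$ subgroups), and $q\,[G/G]+[G/1]=[G/C_p]+q\,[G/C_q]$ for $G\cong C_p\rtimes C_q$.

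\smallskip
After routine reductions --- via the Kummer sequence $0\to E(L)/pE(L)\to S^p(E/L)\to\sha(E/L)[p]\to 0$ and inflation--restriction for $E[p]$; for the third assertion one may assume $E(F)[p]=0$ (else $S^p(E/F)\supseteq E(F)/pE(F)$ already has order $>\#S^p(E/K)$), after which $S^p(E/K)$ injects into $S^p(E/F)$ and it is enough to rule out equality of orders --- the common core becomes the impossibility of the following configuration: the rank hypothesis holds, $\operatorname{rk}E(F)=\operatorname{rk}E(K)=:r$, the lattice $E(K)/E(K)_{\mathrm{tors}}$ is $p$-saturated in $E(F^H)/E(F^H)_{\mathrm{tors}}$ for every $H\le G$, every $\sha(E/F^H)[p^\infty]$ is finite, and in addition $\sha(E/F)[p]=0$ (second and third assertions) or $|\sha(E/F^H)[p^\infty]|$ is independent of $H$ (first assertion). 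Here the constancy of the rank in all subfields and the finiteness of all the $\sha(E/F^H)[p^\infty]$ follow from $E(F)=E(K)$ (resp.\ from $E(F)[p]=0$) together with monotonicity of $p^\infty$-Selmer coranks in finite extensions --- and in the first assertion directly from the hypothesis; the $p$-saturation in the third assertion comes from analysing Kummer classes over $K$ that become trivial over $F$; and connectedness of the Hasse diagram of intermediate fields converts ``no change at any step'' into ``constant''.

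\smallskip
Now the engine. Since the rank is constant, $E(F)\otimes\Q_p\cong r\cdot\mathbf 1$ as $\Q_p[G]$-modules, and, since all $\sha(E/F^H)[p^\infty]$ are finite, the Dokchitser--Dokchitser regulator-constant formalism --- which packages the compatibility of the Birch--Swinnerton-Dyer quotient with Brauer relations, whereby $\prod_H L^\ast(E/F^H,1)^{a_H}=1$ by Artin formalism and the periods, regulators and local factors organise themselves into regulator constants --- yields, using the $p$-saturation to discard spurious lattice indices, a formula of the shape
\[
\ord_p\prod_H|\sha(E/F^H)[p^\infty]|^{\,a_H}\;=\;-\,r\,\ord_p\mathcal{C}_\Theta(\mathbf 1)\;+\;\lambda_\infty\;+\;\sum_{v\nmid\infty}\lambda_v,
\]
where $-r\,\ord_p\mathcal{C}_\Theta(\mathbf 1)$ is the Mordell--Weil contribution, $\mathcal{C}_\Theta(\mathbf 1)=\prod_H|H|^{-a_H}$ being the regulator constant of the trivial representation, equal to $\tfrac12$, $\tfrac1p$, $p^{\,1-p}$, $p^{\,1-q}$ in our four cases, so $\ord_p\mathcal{C}_\Theta(\mathbf 1)$ is $-1$ in cases (a),(b) and the even number $1-p$ (resp.\ $1-q$), which is $<0$, in case (c); the terms $\lambda_v$ at finite $v$ are local invariants --- essentially $\ord_p\prod_H\prod_{w\mid v}c_w(E/F^H)^{-a_H}$ in terms of Tamagawa numbers, with a period factor at $v\mid p$ --- and $\lambda_\infty$ is archimedean, the torsion terms having cancelled because all $E(F^H)[p^\infty]$ coincide. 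In the first assertion the left-hand side is $0$, the product being $|\sha(E/K)[p^\infty]|^{\sum a_H}=1$; in the second and third it is $\le 0$, since the $[G/G]$- and $[G/1]$-terms drop out ($\sha(E/F)[p]=\sha(E/K)[p]=0$) and every surviving term carries a negative coefficient. But $-r\,\ord_p\mathcal{C}_\Theta(\mathbf 1)$ equals $r$ in cases (a),(b) and $r(p-1)$, resp.\ $r(q-1)$, in case (c) --- strictly positive. So everything comes down to $\lambda_\infty+\sum_v\lambda_v$.

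\smallskip
This local analysis is where the real difficulty lies, and it is what I expect to be the main obstacle. One must show: $\lambda_\infty=0$ (immediate for $p$ odd; a short computation for $p=2$); $\lambda_v=0$ whenever $E$ has good or split multiplicative reduction at $v$, and at every $v\mid p$ --- a telescoping argument resting on $\langle\operatorname{Ind}_{D_v}^{G}\mathbf 1,\Theta\rangle=0$ (and an analogue along the inertia subgroup) to annihilate the $\ord_v\Delta(E)$- and ramification-contributions; $\lambda_v=0$ identically in case (c), since there every completion $F^H_w$ has odd degree over $K_v$ at a non-split multiplicative place, so $E$ never becomes split and all $c_w\in\{1,2\}$; and, in cases (a),(b), that $|\lambda_v|\le 1$ at a non-split multiplicative place, with $\lambda_v=0$ in case (a) unless $\ord_v\Delta(E)$ is even (over the relevant quadratic subextension such a place either stays non-split, contributing only a $2$-power Tamagawa factor, or becomes split, of Kodaira type $I_{\ord_v\Delta(E)}$). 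Granting all this, $\bigl|\lambda_\infty+\sum_v\lambda_v\bigr|$ is bounded by the number of non-split multiplicative places of $E/K$ in case (b), by the number of such places with $\ord_v\Delta(E)$ even in case (a), and is $0$ in case (c); fed back into the displayed identity this forces $r$ to be at most that number --- contradicting the hypothesis and completing the proof. The subtlest points will be the archimedean term for $p=2$, the precise $D_{2p}$ local analysis at non-split multiplicative primes, and verifying that the regulator-constant/Birch--Swinnerton-Dyer compatibility genuinely applies once the relevant $\sha[p^\infty]$ are known to be finite.
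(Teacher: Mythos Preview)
Your strategy is exactly the paper's: the same Brauer relations, the same BSD-compatibility identity (the paper's equation for $\prod_H \#\sha(E/F^H)^{n_H}c(E/F^H)^{n_H}$), the same reduction lemma, and the same endgame of bounding the Tamagawa contribution against $r\cdot\ord_p\prod_H|H|^{n_H}$. Your reductions and the sign argument for the $\sha$-product in assertions (2) and (3) are also the paper's, phrased slightly differently.

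There is, however, a genuine error in your local analysis. The claim that $\lambda_v=0$ at split multiplicative places is false, and the proposed ``telescoping via $\langle\operatorname{Ind}_{I_v}^G\mathbf 1,\Theta\rangle=0$'' does not annihilate the ramification contribution: at a split multiplicative place $c_w=e_w\cdot\ord_v\Delta(E)$, the $\ord_v\Delta(E)$-factors do cancel (your $D_v$-argument), but the $e_w$-factors do not. For instance, in $D_{2p}$ with $v$ totally ramified one gets $c_v(E/\Theta)=1/p$, i.e.\ $\lambda_v=+1$; in $C_p\times C_p$ one gets $c_v(E/\Theta)=p^{1-p}$, i.e.\ $\lambda_v=p-1$. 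Similarly your claim that $\lambda_v=0$ in case~(a) at non-split places with $\ord_v\Delta(E)$ odd is wrong: the paper's table gives $c_v(E/\Theta)\in\{1/2,1/4\}$ there, so $\lambda_v\in\{1,2\}$. The upshot is that your bound $|\sum_v\lambda_v|\le(\text{number of bad non-split places})$ fails as stated.

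Fortunately the error is harmless for the conclusion. A direct case check (this is what the paper does, in short tables) shows that every contribution you mis-declared to be zero is in fact \emph{non-negative}; the only places where $\lambda_v<0$ are precisely the non-split multiplicative places that become split in a quadratic subfield (cases (a),(b)), and there $\lambda_v=-1$, with the further restriction $\ord_v\Delta(E)$ even in case~(a). So the one-sided inequality $\sum_v\lambda_v\ge -(\text{number of relevant non-split places})$ holds, and since you only need the right-hand side of your displayed identity to be strictly positive, the contradiction goes through exactly as in the paper. Replace the erroneous ``$\lambda_v=0$'' claims by this sign analysis and your proof is complete.
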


\begin{example}
The first few curves over $\Q$ in Cremona's database that satisfy the
hypotheses of Theorem \ref{thm:main} (\ref{item:shachange}) and (\ref{item:shagrowth})
for all the combinations of $p$ and $G$ listed there
are 91b1, 91b2, 91b3, 123a1, 123a2, 141a1, 142a1,
155a1, all of rank 1 and with no primes of non-split multiplicative reduction,
and with trivial Tate-Shafarevich groups \cite{bsd}. Out of these, 91b3,
123a2, 141a1, 142a1 have trivial torsion subgroup over $\Q$ and thus also satisfy
the stronger hypothesis of Theorem \ref{thm:main} (\ref{item:selmergrowth})
for all the pairs $p$, $G$.

Also, the huge majority of rank 2 curves over $\Q$ are expected to have trivial
Tate-Shafarevich groups \cite{Del07}. For example the curve with Cremona label
389a1 almost certainly satisfies all the hypotheses of Theorem (\ref{thm:main})
for all $p$, $G$. Unfortunately, we do not even know
that the Tate-Shafarevich group is finite for a single rank 2 elliptic curve.
In principle, triviality of the $p$-part for any fixed prime $p$ can
be checked algorithmically using descent (see \cite{StollSchaefer} and the references therein).
Such checks have been performed for thousands of higher rank curves, including 389a1,
and thousands
of primes in \cite{SteinWuthrich} (using methods quite different from descent).
\end{example}

\begin{example}
  The following example illustrates that the above results are not a parity
  phenomenon. Let $E$ be the curve with Cremona label 65a1. It has rank 1
  over $\Q$ and square-free discriminant, and $\sha(E/\Q)$ is trivial, so the
  conclusion of Theorem \ref{thm:main} (\ref{item:shagrowth}) holds 
  for all combinations of $p$ and $G$ listed in the theorem, and in particular for
  $p=2$, $G\cong C_2\times C_2$. Let $F$ be the bi-quadratic field
  $\Q(\sqrt{3},\sqrt{5})$. Then we have $E(\Q) = E(F)$. However,
  $\sha(E/F)[2]\cong C_2\times C_2$, as predicted by Theorem \ref{thm:main}.
  This cannot be detected by root numbers.
\end{example}

We will collect the necessary ingredients of the proof in great generality,
with no assumption on the Galois group of the extension $F/K$, although we 
will simplify the exposition by assuming early on that $E$ is semistable.
Then we will perform the necessary calculations in the case of dihedral
and bi-quadratic extensions, thereby proving Theorem \ref{thm:main} for
(\ref{item:c2c2}) and (\ref{item:d2p}).

In the last section, we will discuss generalisations to other Galois groups,
such as those of Theorem \ref{thm:main} (\ref{item:cpcp}).
We will also explain why our approach cannot be
pushed any further than that. This will rely on a certain representation
theoretic classification \cite[Corollary 9.2]{BraRels}.
\begin{acknowledgements}
  I would like to thank Vladimir Dokchitser for several very helpful remarks.
  I would also like to thank the anonymous referee for carefully reading the
  manuscript and for suggestions that helped   greatly improve the exposition.
\end{acknowledgements}

%---------------------
\section{Tamagawa numbers and regulators}
%---------------------

We begin with a simplifying reduction to a seemingly special case:
\begin{lemma}\label{lemma:simpler}
  Let $E/K$ be a semistable elliptic curve over a number field.
  Let the prime $p$ and the finite group $G$ be one of the combinations of
  Theorem \ref{thm:main}. Theorem \ref{thm:main} holds for all extensions
  $F/K$ with Galois group $G$ if and only if it holds for all $F/K$ with
  Galois group $G$ that satisfy the following additional conditions:
  \begin{enumerate}
    \item\label{item:sha} $\sha(E/F)[p^\infty]$ is finite (and consequently, by the
    inflation-restriction exact sequence, so are the $p$-primary parts of the
    Tate-Shafarevich groups over all subfields);
    \item $E(K)\otimes \Z_p = E(F)\otimes \Z_p$ under the natural inclusion map.
  \end{enumerate}
\end{lemma}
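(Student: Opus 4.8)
The ``only if'' direction is immediate, since the class of $G$-extensions of $K$ satisfying (1) and (2) is contained in the class of all $G$-extensions. For ``if'', suppose Theorem~\ref{thm:main} holds for every $F/K$ with $\Gal(F/K)\cong G$ satisfying (1) and (2), fix an arbitrary such $F/K$, and verify the three conclusions in turn. In each case the plan is to isolate the ``degenerate'' situations in which the conclusion is automatic and, in the remaining situation, to check that $F$ satisfies (1) and (2), so that the hypothesis applies.

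For (\ref{item:shachange}), assume $\sha(E/K)[p^\infty]$ is finite. If $E(F)\supsetneq E(K)$ we are done; otherwise $E(F)=E(K)$, so (2) holds. If $\sha(E/F)[p^\infty]$ is infinite, then its order is not constant along the tower $F/K$, hence changes at some step; if it is finite, then (1) holds as well and the hypothesis gives the conclusion for $F$. Part (\ref{item:shagrowth}) is parallel: $\sha(E/K)[p]=0$ forces $\sha(E/K)[p^\infty]=0$ (a nonzero $p$-primary torsion group has nonzero $p$-torsion), so the hypothesis of (\ref{item:shachange}) is in force, and in the branch where $\sha(E/F)[p^\infty]$ is infinite this group is in particular nonzero.

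Part (\ref{item:selmergrowth}) requires slightly more. Write $r_K$, $r_F$ for the Mordell--Weil ranks over $K$ and $F$, so $r_F\geq r_K$. From the descent sequence $0\to E(F)/pE(F)\to S^p(E/F)\to\sha(E/F)[p]\to 0$ together with $\sha(E/K)[p]=0$ and $E(K)[p]=0$ one reads off $\#S^p(E/K)=p^{r_K}$ and, in general, $\#S^p(E/F)\geq\#\bigl(E(F)/pE(F)\bigr)\geq p^{r_F}$. If $\sha(E/F)[p^\infty]$ is infinite, then $\sha(E/F)[p]\neq 0$, so $\#S^p(E/F)\geq p^{\,r_F+1}>\#S^p(E/K)$. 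If $\sha(E/F)[p^\infty]$ is finite but (2) fails, I claim $\#\bigl(E(F)/pE(F)\bigr)>p^{r_K}$: this is clear when $r_F>r_K$, while if $r_F=r_K$ then, since $-\otimes\Z_p$ is exact and $E(K)=E(F)^G$, we have $E(K)\otimes\Z_p=(E(F)\otimes\Z_p)^G$, a submodule of the same $\Q_p$-rank as $E(F)\otimes\Z_p$; hence $G$ acts trivially on $E(F)\otimes\Q_p$, so if moreover $E(F)[p]=0$ then $E(F)\otimes\Z_p$ is $\Z_p$-free and the $G$-action on it is the restriction of the trivial action on $E(F)\otimes\Q_p$, forcing $E(K)\otimes\Z_p=E(F)\otimes\Z_p$ and contradicting the failure of (2). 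Thus $E(F)[p]\neq 0$ and $\dim_{\F_p}E(F)/pE(F)=r_F+\dim_{\F_p}E(F)[p]>r_K$, proving the claim. In the one remaining case, $\sha(E/F)[p^\infty]$ is finite and (2) holds, so both (1) and (2) hold and the hypothesis applies directly.

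The argument is essentially bookkeeping; the only step needing genuine care is the rank-preserving case of (\ref{item:selmergrowth}), where one uses that $F/K$ is Galois in order to rule out Mordell--Weil growth that leaves $E(F)[p]$ unchanged. Throughout I use freely the standard facts that the descent sequence above is exact and that $\sha(E/L)[p^n]$ is finite for every $n$ and every intermediate field $L$; the latter, already noted in the statement, is what makes finiteness of the $p$-primary Tate--Shafarevich group descend to subfields.
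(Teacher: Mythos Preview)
Your proof is correct and follows essentially the same strategy as the paper's: isolate the extensions where condition (1) or (2) fails and check that the conclusions of Theorem~\ref{thm:main} hold there automatically. The only notable difference is in the rank-preserving case of part~(\ref{item:selmergrowth}): you argue module-theoretically (the trivial $G$-action on $E(F)\otimes\Q_p$ lifts to $E(F)\otimes\Z_p$ when the latter is $\Z_p$-free), whereas the paper phrases the same step as ``by Kummer theory, $E(F)[p]$ must be non-trivial''; both routes produce a nonzero $p$-torsion point in $E(F)$ and hence the desired strict inequality.
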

\begin{proof}
  If $\sha(E/K)[p^\infty]$ is infinite, then Theorem \ref{thm:main} is empty.
  Also, if $\sha(E/K)[p^\infty]$ is finite, then clearly the conclusions of
  Theorem \ref{thm:main} automatically hold for all $F/K$ with Galois group
  $G$ for which $\sha(E/F)[p^\infty]$ is infinite. This proves (\ref{item:sha}).
  
  Finally, if $E(K)\otimes \Z_p \neq E(F)\otimes \Z_p$, then either
  $\rk(E/F)>\rk(E/K)$, or a point of infinite order on $E(K)$ becomes
  $p$-divisible over $F$, or $\#E(F)[p^\infty] > \#E(K)[p^\infty]$. In all
  three cases, the conclusions (\ref{item:shachange}) and (\ref{item:shagrowth})
  of Theorem \ref{thm:main} obviously hold.
  As for (\ref{item:selmergrowth}), certainly if
  $\rk(E/F)>\rk(E/K)$ or if $\#E(F)[p^\infty] > \#E(K)[p^\infty]$, then
  the assumption $\sha(E/K)[p]=0$ forces $\#S^p(E/F) > \#S^p(E/K)$. If on the
  other hand a point of infinite order on $E(K)$ becomes $p$-divisible, then
  by Kummer theory, $E(F)[p]$ must be non-trivial, so the assumption
  $E(K)[p]=0$ again forces $\#S^p(E/F) > \#S^p(E/K)$.
\end{proof}

We will therefore henceforth restrict our attention to semistable elliptic
curves $E/K$ and to Galois extensions $F/K$ such that $E/F$ satisfies the
additional conditions of Lemma \ref{lemma:simpler}.

\begin{definition}
Let $G$ be a finite group. A formal $\Z$-linear combination of representatives
of conjugacy classes of
subgroups $\Theta=\sum_H n_HH$ is called a \emph{Brauer relation} if the virtual
permutation representation $\bigoplus \C[G/H]^{\oplus n_H}$ is zero.
\end{definition}
For a detailed discussion of the concept of Brauer relations, see the
introduction to \cite{BraRels}.

Let $E/K$ be an elliptic curve over a number field, $F/K$ a Galois extension
with Galois group $G$, and $\Theta=\sum_H n_HH$ a Brauer relation.
There is a corresponding relation between $L$-functions of $E$ over the
intermediate fields:
\[
\prod_H L(E/F^H,s)^{n_H}=1.
\]
If $E$ is semistable and has finite Tate-Shafarevich groups over all
intermediate extensions of $F/K$, then a combination of various compatibility
results on the Birch
and Swinnerton-Dyer conjecture yields a relation\footnote{see \cite[Theorem 2.3]{squarity} and the
remarks at the beginning of \cite[\S 2.2]{squarity}, as well as \cite[Remark 2.3]{Bar-09}}
between arithmetic invariants of $E$
over the intermediate fields. We shall recall the necessary notation shortly:
\begin{eqnarray}\label{eq:bsd}
\prod_H \left(\frac{c(E/F^H)\#\sha(E/F^H)\Reg(E/F^H)}{|E(F^H)_{\tors}|^2}\right)^{n_H} = 1.
\end{eqnarray}
Moreover, if only the $p$-primary parts of the Tate-Shafarevich groups are
assumed to be finite for some prime $p$, then the $p$-part of equation
(\ref{eq:bsd}) holds.

Here, $c$ denotes the product of Tamagawa numbers\footnote{It is here that
we use the assumption that $E$ is semistable. Otherwise, the Tamagawa numbers
have to be re-normalised. See \cite[Theorem 2.3]{squarity} for the general case.} of $E$
over the finite places of the respective field.

Recall that the regulator of an elliptic curve is the determinant of the
N\'eron-Tate height pairing evaluated on any basis of the free part of
the Mordell-Weil group. Note that since each of the regulators is some real
number, in general transcendental, it does not make any sense to speak of
its $p$-part. However, since the quotient $\prod_H \Reg(E/F^H)^{n_H}$ is a rational
number (this is an immediate consequence of \cite[Theorem 2.17]{tamroot}),
it does make sense to speak of the $p$-parts of the regulator quotient and
of the remaining terms.

The precise normalisation of the N\'eron-Tate height
pairing that enters the Birch and Swinnerton-Dyer conjecture
will be crucial for us. If $M/K$ is a finite extension of fields,
and if $\langle\cdot,\cdot\rangle_K$, respectively $\langle\cdot,\cdot\rangle_M$
denotes the N\'eron-Tate height pairing on $E(K)$, respectively on $E(M)$,
then for any $P,Q\in E(K)$, $\langle P,Q\rangle_M = [M:K]\langle P,Q\rangle_K$.
In particular, if $E/K$ does not acquire any new points of infinite order over $F$,
then the regulator quotient in (\ref{eq:bsd}) does not vanish in general, but rather
\begin{eqnarray}\label{eq:regconst}
\prod_H \Reg(E/F^H)^{n_H} = \prod_H \frac{1}{|H|^{n_H\rk E(K)}}.
\end{eqnarray}
More generally, if $E(K)\otimes\Z_p = E(F)\otimes\Z_p$,
then the $p$-part of equation (\ref{eq:regconst}) holds.

In summary, if $E/K$ is a semistable elliptic curve with $\sha(E/F)[p^\infty]$
finite and if $E(K)\otimes\Z_p= E(F)\otimes\Z_p$, then
\begin{eqnarray}\label{eq:simpleBSD}
\prod_H \#\sha(E/F^H)^{n_H}c(E/F^H)^{n_H} =_{p'} \prod |H|^{n_H\rk E(K)}.
\end{eqnarray}
Here, $=_{p'}$ means that both sides have the same $p$-adic valuation.

\section{Dihedral and bi-quadratic extensions}
\subsection{Dihedral extensions}
Suppose that $G\cong D_{2p}$, where $p$ is an odd prime. There is a Brauer relation
in $G$ of the form $\Theta = 1 - 2C_2 - C_p + 2G$. For this relation, we have
\[
\prod |H|^{n_H} = \frac{(2p)^2}{4p} = p,
\]
so that the right hand side of equation (\ref{eq:simpleBSD}) is $p^{\rk E(K)}$.
If $v$ is a place of $K$, write $c_v(E/K)$ for the Tamagawa
number at $v$, and $c_v(E/F^H)$ for the product of Tamagawa numbers at
places of $F^H$ above $v$. Write $c_v(E/\Theta)$ for
the quotient $\frac{c_v(E/K)^2c_v(E/F)}{c_v(E/F^{C_2})^2c_v(E/F^{C_p})}$.
Similarly, write
$\#\sha(E/\Theta)[p^\infty]$ for the $p$-primary part of the corresponding
quotient of sizes of Tate-Shafarevich groups.
Finally, let $M$ denote the intermediate quadratic extension $M=F^{C_p}$.
The following table gives the possible values of the $p$-part of $c_v(E/\Theta)$,
depending on the reduction type of $E$ at $v$ (horizontal axis) and on the
splitting behaviour of $v$ in $F/K$ (vertical axis):\vspace{6mm}\newline
\begin{tabular}{l | c | c | c }
 \backslashbox[0pt][l]{\tiny{splitting of $v$}\kern-2em}{\kern-1.5em\tiny{redn. type of $E$}} & 
% & \makebox(0,0)[lb]{\put(-1.25\tabcolsep,0){\llap{\sm{redn. type of $E$}}}}%
\begin{tabular}{c}\sm{split mult.}\\ \sm{over $K$} \end{tabular} & \begin{tabular}{c}
\sm{non-split mult.}\\ \sm{over $F$} \end{tabular}
 & \begin{tabular}{c}\sm{non-split mult. over $K$,}\\
  \sm{split mult. over $M$}\end{tabular}\\
 \hline 
% \makebox(0,0){\put(0,2.25\normalbaselineskip){\rlap{\sm{splitting of $v$}}}}%
% \hskip 0.09em
 \sm{splits into more than one prime} & 1 & 1 & 1 \\
 \hline
% \begin{tabular}{l}
%   \hskip -0.45em
\sm{inert in $M/K$, ramified in $F/M$}
%  \hskip -0.45em \sm{ramified in $F/M$}
% \end{tabular}
& $1/p$ & --- & $p$ \\
 \hline
% \sm{inert in $F/K$} & 1 & --- & 1 \\
% \hline
 \sm{totally ramified in $F/K$} & $1/p$ & 1 & --- 
\end{tabular}
\vspace{6mm}\newline
It follows immediately from this table and from equation (\ref{eq:simpleBSD})
that if $E/K$ is semistable, if $\sha(E/F)[p^\infty]$ is finite, if
$E(K)\otimes \Z_p= E(F)\otimes \Z_p$, and if the rank of $E/K$ is greater
than the number of primes of non-split multiplicative reduction,
then $\#\sha(E/\Theta)[p^\infty]>1$, and thus
at least one of $\#\sha(E/K)[p^\infty]$, $\#\sha(E/F)[p^\infty]$ is strictly
larger than at least one of $\#\sha(E/M)[p^\infty]$, $\#\sha(E/F^{C_2})[p^\infty]$.
This, together with Lemma \ref{lemma:simpler} concludes the proof of
Theorem \ref{thm:main} for $G\cong D_{2p}$.

\subsection{Bi-quadratic extensions}
We now apply the same reasoning to $G\cong C_2\times C_2$, with $p=2$.
Denote the three distinct subgroups of order 2 by
$C_2^a$, $C_2^b$, $C_2^c$. The space of Brauer relations in $G$ is generated
by the relation $\Theta = 1 - C_2^a - C_2^b - C_2^c +2G$, for which we have
\[
\prod |H|^{n_H} = \frac{16}{8} = 2.
\]
%Let $F/K$ be a Galois extension of number fields with Galois group $G$ and let
%$E/K$ be an elliptic curve.
Again writing $c_v(E/\Theta)$ for the corresponding quotient of Tamagawa numbers
of $E$ at places above $v$ over the corresponding intermediate fields of $F/K$,
the following table gives the possible values of $c_v(E/\Theta)$:
\vspace{6mm}\newline
\begin{tabular}{l | c | c | c }
 \backslashbox[0pt][l]{\tiny{splitting of $v$}\kern-2em}{\kern-1.5em\tiny{redn. type of $E$}} & 
% & \makebox(0,0)[lb]{\put(-1.25\tabcolsep,0){\llap{\sm{redn. type of $E$}}}}%
\begin{tabular}{c}\sm{split mult.}\\ \sm{over $K$} \end{tabular} & \begin{tabular}{c}
\sm{non-split mult.}\\ \sm{over $F$} \end{tabular}
 & \begin{tabular}{c}\sm{non-split mult. over $K$,}\\
  \sm{split mult. over some $F^{C_2}$}\end{tabular}\\
 \hline 
% \makebox(0,0){\put(0,2.25\normalbaselineskip){\rlap{\sm{splitting of $v$}}}}%
% \hskip 0.09em
 \sm{splits in some $F^{C_2}$} & 1 & 1 & 1 \\
 \hline
% \begin{tabular}{l}
%   \hskip -0.45em
\begin{tabular}{l}
\hskip -0.45em\sm{inert in some $F^{C_2}/K$,}\\ \hskip -0.45em\sm{ramified in $F/F^{C_2}$}
\end{tabular}
%  \hskip -0.45em \sm{ramified in $F/M$}
% \end{tabular}
& $1/2$ & --- & \begin{tabular}{@{}l@{}l}
  \sm{2:} &\sm{ $\ord_v(\Delta(E))$ is even}\\
  \sm{1/2:} & \sm{ otherwise} \end{tabular} \\
 \hline
% \sm{inert in $F/K$} & 1 & --- & 1 \\
% \hline
 \sm{totally ramified in $F/K$} & $1/2$ &
  \begin{tabular}{@{}l@{}l@{}}
  \sm{1:} &\sm{ $\ord_v\Delta(E)$ even}\\
  \sm{1/4:} & \sm{ otherwise} \end{tabular}
  & --- 
\end{tabular}\vspace{6mm}\newline
As above, this table together with equation (\ref{eq:simpleBSD}) and with
Lemma \ref{lemma:simpler} proves Theorem \ref{thm:main} for $G\cong C_2\times C_2$.

\section{Generalisation to other Galois groups}
If $G$ is a subgroup of a group $\tilde{G}$, then by transitivity of
induction, a Brauer relation $\Theta$ in $G$ automatically gives a Brauer
relation $\Ind_{\tilde{G}/G}\Theta$ in $\tilde{G}$. Also, if $G$ is a quotient
of a group $\Gamma$, $G=\Gamma/N$, then a Brauer relation $\Theta=\sum_H n_HH$
in $G$ gives rise to a Brauer relation $\Inf_{\Gamma/N}\Theta=\sum_Hn_HNH$ in $\Gamma$.

In general, in order to prove by the same technique as above that the size
of the $p$-Selmer group
of some elliptic curve grows in all Galois extensions with Galois group $G$,
we need to have a Brauer relation $\Theta=\sum_H n_H H$ in $G$ such that
$\ord_p(\prod |H|^{n_H})\neq 0$. This quantity is clearly invariant under
inductions and lifts of Brauer relations.

\begin{proposition}[\cite{BraRels}, Corollary 9.2]\label{prop:brarels}
  Let $p$ be a prime number. A finite group $\tilde{G}$ has a Brauer relation
  $\Theta=\sum_H n_H H$ with $\ord_p(\cC_\Theta(\triv))\neq 0$ if and only
  if $\tilde{G}$ has a subquotient $G$ isomorphic either to $C_p\times C_p$ or
  to $C_p\rtimes C_q$ with $C_q$ cyclic of prime order
  acting faithfully on $C_p$.
  Moreover, in the former case $\Theta$ can be taken to be induced and/or
  lifted from the relation
  $1 - \sum_{U\leq_p G} U + pG$; % in a subquotient isomorphic to $\F_p^2$;
  while in the latter case $\Theta$ can be taken to be induced and/or lifted
  from the relation
  $1 - q C_q - C_p + qG$.% in a subquotient isomorphic to
%  $C_p\rtimes C_q$ with $C_q$ of prime order and acting faithfully on $C_p$.
\end{proposition}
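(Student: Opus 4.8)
The plan is to prove the two implications separately. Throughout, write $\lambda_p(\Theta):=\ord_p\big(\prod_H|H|^{n_H}\big)=\ord_p(\cC_\Theta(\triv))$ for a Brauer relation $\Theta=\sum_H n_H H$; this is a $\Z$-valued homomorphism on the group of Brauer relations of any finite group, and, as noted above, it is invariant under induction and inflation of relations. (For inflation along $\Gamma\to\Gamma/N$ one uses that $\sum_H n_H=0$ for every Brauer relation, since $\sum_H n_H$ is the multiplicity of $\triv$ in $\bigoplus_H\C[G/H]^{\oplus n_H}$.)

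For the ``if'' direction I would first write down the two model relations and check directly that they are Brauer relations, by decomposing the relevant permutation modules into irreducibles. For $G\cong C_p\times C_p$ and $\Theta_0=1-\sum_{U\leq_p G}U+pG$ (the sum over the $p+1$ index-$p$ subgroups), this reduces to the observation that every non-trivial character of $G$ has kernel one of the $U$; and one computes $\prod_H|H|^{n_H}=p^{-(p+1)}(p^2)^{p}=p^{p-1}$, so $\lambda_p(\Theta_0)=p-1\neq 0$. For $G\cong C_p\rtimes C_q$ with $C_q$ of prime order acting faithfully (so $q\mid p-1$) and $\Theta_0=1-qC_q-C_p+qG$, a short computation with the character table of $G$ --- equivalently, with Frobenius reciprocity and Mackey's formula --- shows that $\Theta_0$ is a Brauer relation with $\prod_H|H|^{n_H}=q^{-q}p^{-1}(pq)^{q}=p^{q-1}$, so $\lambda_p(\Theta_0)=q-1\neq 0$. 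Finally, if $\tilde G$ has such a $G$ as a subquotient, say $G\cong H'/N'$ with $H'\leq\tilde G$ and $N'$ normal in $H'$, then $\Ind_{\tilde G/H'}\Inf_{H'/N'}\Theta_0$ is a Brauer relation in $\tilde G$ with the same non-zero value of $\lambda_p$, and it has exactly the shape asserted in the proposition.

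For the ``only if'' direction I would deduce the claim from the classification, in \cite{BraRels}, of all finite groups admitting a \emph{primitive} Brauer relation --- one that is not a rational linear combination of relations induced and inflated from proper subquotients --- together with the explicit generators given there for their groups of primitive relations. Since the group of Brauer relations of $\tilde G$, tensored with $\Q$, is spanned by inductions and inflations of primitive relations of subquotients of $\tilde G$, and since $\lambda_p$ is additive and invariant under induction and inflation, the existence of a relation $\Theta$ in $\tilde G$ with $\lambda_p(\Theta)\neq 0$ forces $\lambda_p$ to be non-zero on some primitive relation of some subquotient $G'$ of $\tilde G$. It then remains to run through the classification, compute $\ord_p\big(\prod_H|H|^{n_H}\big)$ for each listed primitive relation, and observe that it vanishes in every case except for the families $C_p\times C_p$ and $C_p\rtimes C_q$ (with $C_q$ of prime order acting faithfully), where it equals $p-1$, respectively $q-1$, by the computation above. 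Hence $G'$ is of one of the two required types, as wanted.

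The main obstacle is the ``only if'' direction, and within it the reliance on the full classification of primitive Brauer relations together with the case-by-case evaluation of $\lambda_p$ against it: computing $\prod_H|H|^{n_H}$ for any single primitive relation is elementary --- decompose a handful of permutation characters and read off the orders of the subgroups involved --- but the classification comprises several infinite families of groups as well as a number of sporadic cases, so the genuine work lies in verifying uniformly that $\lambda_p$ vanishes on all of them outside the two distinguished families. A minor point that nonetheless needs care, though it causes no real difficulty, is the interplay between the $\Q$-span and the integral structure of the relation groups: as $\lambda_p$ is $\Z$-valued and only its vanishing is required, working rationally suffices. Note finally that the construction in the ``if'' direction also yields the last clause of the proposition --- that the witnessing relation can be taken induced and/or inflated from the two explicit model relations --- so that needs no separate argument.
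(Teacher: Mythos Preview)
The paper does not give a proof of this proposition: it is quoted verbatim, with attribution, from \cite[Corollary~9.2]{BraRels}, and no argument is supplied here. So there is nothing to compare your proposal against in the present paper.

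That said, your sketch is a faithful outline of how such a result is established in \cite{BraRels}: the ``if'' direction is exactly the elementary computation the present paper itself carries out just after the proposition (the values $p^{p-1}$ and $p^{q-1}$), combined with invariance of $\lambda_p$ under induction and inflation; and for the ``only if'' direction you correctly identify that the substance lies entirely in the classification of primitive Brauer relations in \cite{BraRels}, together with a case check that $\lambda_p$ vanishes on all primitive relations outside the two distinguished families. Your remarks about working rationally and about the final clause following from the explicit construction are accurate. In short, your plan is correct and is essentially the argument of the cited reference, but you should be aware that the present paper treats the proposition as a black box rather than proving it.
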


Suppose that $\tilde{G}$ is a finite group that has a quotient isomorphic to
$G$. If some inequalities between the rank of an elliptic curve and the number
of places of certain reduction types ensure that sizes of Tate-Shafarevich 
groups or Mordell-Weil groups grow in all $G$-extensions of number fields,
as in Theorem \ref{thm:main}, then clearly the same inequalities imply the
same conclusions for $\tilde{G}$-extensions.

On the other hand, suppose that $G$ is a subgroup of a finite group $\tilde{G}$.
Suppose that some condition on the rank of $E$ and on the
number of places of given reduction types implies the conclusions of
Theorem \ref{thm:main} (\ref{item:shachange}), say, for all Galois extensions
with Galois group $G$.
Some care is then needed to deduce the same result for $\tilde{G}$-extensions.
Indeed, if $F/K$ is Galois with Galois group $\tilde{G}$, then one would like
to deduce the conclusion of Theorem \ref{thm:main} (\ref{item:shachange}) for $F/K$ by
applying the theorem to the extension $F/F^G$. But in order to satisfy the required
inequalities for the number of places of $F^G$, one might need to impose stronger
conditions on $E/K$, since the number of places of given reduction type might grow
in $F^G/K$ (by at most a factor of $[\tilde{G}:G]$). This is of course a
straightforward modification.

Thus, we may restrict
attention to the groups $C_p\times C_p$ and $C_p\rtimes C_q$ as in
Proposition \ref{prop:brarels}.

When $G\cong C_p\times C_p$ and $\Theta=1 - \sum_{U\leq_p G} U + pG$, we have
\[
\prod |H|^{n_H} = \frac{(p)^{2p}}{p^{p+1}} = p^{p-1}.
\]
When $G\cong C_p\rtimes C_q$ and $\Theta=1 - q C_q - C_p + qG$, we have
\[
\prod |H|^{n_H} = \frac{(pq)^q}{pq^q} = p^{q-1}.
\]

Having already dealt with such groups of even order,
we may now restrict our attention to groups of odd order, so that only the
primes of $K$ at which $E$ has split multiplicative reduction contribute to
the $p$-part of the corresponding Tamagawa number quotients.

Here are the possible values of $c_v(E/\Theta)$ when $E/K$ has split
multiplicative reduction at $v$, first for $G\cong C_p\times C_p$
and $\Theta=1 - \sum_{U\leq_p G} U + pG$, and then for $G\cong C_p\rtimes C_q$
and $\Theta=1 - q C_q - C_p + qG$, where $p$ and $q$ are odd primes:\vspace{6mm}\newline
\begin{tabular}{c | c | c}
\sm{$v$ splits} & \sm{$v$ is inert in some $F^{C_p}/K$ and ramified in $F/F^{C_p}$}
& \sm{$v$ is totally ramified}\\
\hline
1 & $p^{1-p}$ & $p^{1-p}$
\end{tabular},\vspace{6mm}\newline
\begin{tabular}{c | c | c}
\sm{$v$ splits} & \sm{$v$ is inert in $F^{C_p}/K$ and ramified in $F/F^{C_p}$}
& \sm{$v$ is totally ramified}\\
\hline
1 & $p^{1-q}$ & $p^{1-q}$
\end{tabular}.\vspace{6mm}\newline
These tables together with equation (\ref{eq:simpleBSD}) finish the proof of
Theorem \ref{thm:main} for $G\cong C_p\times C_p$ and $G\cong C_p\rtimes C_q$.

\end{document}